\title{On $K_5$ and $K_{3,3}$-minors of graphs and regular matroids}
\author{Jo\~ao Paulo Costalonga}
\address{{\upshape joaocostalonga@yahoo.com.br}\\
  Universidade Estadual de Maring\'a\\
  Av. Colombo, 5790, Dpto. de Matem\'atica, Bl. F67, sl. 221\\
  Maring\'a-PR, 87020-900, Brazil\\
Tel. +55 44 3011 5356
	 }
\newenvironment{proofof}{}{\hfill$\Box$\medskip}
\newenvironment{proof}{\noindent{\it Proof: }}{\hfill$\Box$\medskip}
\newtheorem{lemma}{Lemma}[section]
\newtheorem{theorem}[lemma]{Theorem}
\newtheorem{corollary}[lemma]{Corollary}
\newtheorem{prop}[lemma]{Proposition}
\newcommand{\cont}{\subseteq}
\renewcommand{\u}{\cup}
\newcommand{\s}{^*}
\newcommand{\del}{\backslash}
\begin{document}
%\begin{center}{\bf IMPORTANT: This file shall be used only for scholarly purposes}\end{center}
\begin{abstract}
In this paper we prove two main results about obstruction to graph planarity.
One is that, if $G$ is a $3$-connected graph with a $K_5$-minor and $T$ is a triangle of $G$, then $G$ has a $K_5$-minor $H$, such that $E(T)\cont E(H)$.

Other is that  if $G$ is a $3$-connected simple non-planar graph not isomorphic to $K_5$ and $e,f\in E(G)$, then $G$ has a minor $H$ such that $e,f\in E(H)$ and, up to isomorphisms, $H$ is one of the four non-isomorphic simple graphs obtained from $K_{3,3}$ by the addiction of  $\,0$, $1$ or $2$ edges. We generalize this second result to the class of the regular matroids.% As corollary of the second result, we prove that each 2-connected non-planar graph with an edge $e$ has a minor $H$ using $e$, such that $H\cong K_5$, $H\cong K_{3,3}$ or $H$ is isomorphic to $K_{3,3}$ with an extra edge.
\\
\end{abstract}

\begin{keyword}
graph minors \sep graph planarity \sep  regular matroid \sep Kuratowski Theorem \sep Wagner Theorem
\end{keyword}

\maketitle

\newcommand{\kttvertices}{\put(10,0){\circle*{4}}\put(10,20){\circle*{4}}\put(10,40){\circle*{4}}\put(30,0){\circle*{4}}\put(30,20){\circle*{4}}\put(30,40){\circle*{4}}\put(-4,0){$u_3$}\put(-4,20){$u_2$}\put(-4,40){$u_1$}\put(32,0){$v_3$}\put(32,20){$v_2$}\put(32,40){$v_1$}}
\newcommand{\kttedges}{\drawline(10,0)(30,0)(10,20)(30,20)(10,40)(30,40)(10,0)(30,20)
\drawline(10,20)(30,40)\drawline(10,40)(30,0)}

\newcommand{\kttuuarestas}{\drawline(10,40)(30,20)(10,0)(30,0)(10,20)(30,20)(30,0)(10,40)(30,40)(10,0)(10,20)(30,40)}
\newcommand{\ktqvertices}{\put(10,10){\circle*{4}}\put(10,30){\circle*{4}}\put(10,50){\circle*{4}}\put(40,0){\circle*{4}}\put(40,20){\circle*{4}}\put(40,40){\circle*{4}}\put(40,60){\circle*{4}}}
\newcommand{\edgepata}{\drawline(40,60)(10,30)(10,10)(40,0)(10,30)(40,40)(40,20)(40,0)(10,50)(40,60)(10,10)}
\newcommand{\labelpata}{\put(-4,10){$u_3$}\put(-4,30){$u_2$}\put(-4,50){$u_1$}\put(42,-2){$v_3$}\put(42,18){$w_2$}\put(42,40){$w_1$}\put(42,60){$v_1$}}
\newcommand{\kttuu}{K_{3,3}^{1,1}}

\newcommand{\F}{\mathcal{F}}
\newcommand{\N}{\mathcal{N}}

\section{Introduction}

We use the terminology set by Oxley~\cite{Oxley}. Our graphs are allowed to have loops and multiple edges. When there is no ambiguity we denote by $uv$ the edge linking the vertices $u$ and $v$. We use the notation $si(G/e)$ for a simplification of $G$ (a graph obtained from $G$ by removing all loops, and all, but one, edges in each parallel class). Usually we choose the edge-set of $si(G)$ satisfying our purposes with no mentions. It is a consequence of Whitney's 2-Isomorphism Theorem (Theorem 5.3.1 of \cite{Oxley}) that, for each $3$-connected graphic matroid $M$, there is, up to isomorphisms, a unique graph whose $M$ is the cycle matroid. We also use this result without mention, so as Kuratowski and Wagner Theorems about graph planarity. When talking about a {\bf triangle} in a graph we may be referring both to the subgraph corresponding to the triangle as to its edge-set. We say that a set of vertices in a graph is {\bf stable} if such set has no pair of vertices linked by an edge.

Let $U$ and $V$ be different maximal stable sets of vertices in $K_{3,3}$. We define $K_{3,3}^{i,j}$ to be the simple graph obtained from $K_{3,3}$ by adding $i$ edges linking pairs of vertices of $U$ and $j$ edges linking pairs of vertices of $V$. By default, we label the vertices of $K_{3,3}^{i,j}$ like in Figure 1.
\begin{figure}\caption{Labels for extensions of $K_{3,3}$}% fig1
\begin{center}
\begin{picture}(40,50)(0,-15)
\kttvertices\kttedges\put(10,-15){$K_{3,3}$}
\end{picture}\hspace{0.4cm}
\begin{picture}(40,50)(0,-15)\put(10,-15){$K_{3,3}^{0,1}$}
\kttvertices\kttedges\drawline(30,0)(30,20)
\end{picture}\hspace{0.4cm}
\begin{picture}(40,50)(0,-15)\put(10,-15){$K_{3,3}^{0,2}$}
\kttvertices\kttedges\drawline(30,0)(30,20)(30,40)
\end{picture}\hspace{0.4cm}
\begin{picture}(40,50)(0,-15)\put(10,-15){$K_{3,3}^{1,1}$}
\kttvertices\kttedges\drawline(30,0)(30,20)\drawline(10,0)(10,20)
\end{picture}\hspace{0.4cm}
\begin{picture}(40,50)(0,-15)\put(10,-15){$K_{3,3}^{1,2}$}
\kttvertices\kttedges\drawline(30,0)(30,20)(30,40)\drawline(10,0)(10,20)
\end{picture}\hspace{0.4cm}
\begin{picture}(40,50)(0,-15)\put(10,-15){$K_{3,3}^{2,2}$}
\kttvertices\kttedges\drawline(30,0)(30,20)(30,40)\drawline(10,0)(10,20)(10,40)
\end{picture}
\end{center}
\end{figure}

A family $\F$ of matroids (graphs, resp.) is said to be {\bf $k$-rounded} in a minor-closed class of matroids (graphs, resp.) $\N$ if each member of $\F$ is $(k+1)$-connected and, for each $(k+1)$-connected matroid (graph, resp.) $M$ of $\N$ with an $\F$-minor and, for each $k$-subset $X\cont E(M)$, $M$ has an $\F$-minor with $X$ in its ground set (edge set, resp.). When $\N$ is omitted we consider it as the class of all matroids (graphs, resp.). By Whitney's 2-isomorphism Theorem, the concepts of $k$-roundedness for graphs and matroids agree, for $k\ge 2$. Such definition is a slight generalization of that one made by Seymour~\cite{Seymour1985}. For more information about $k$-roundedness we refer the reader to Section 12.3 of \cite{Oxley}.

The second main result stated in the abstract is Corollary \ref{sec-res}, that follows from the next Theorem we establish here:

\begin{theorem}\label{k33-classes} The following families of graphs are $2$-rounded:
\begin{enumerate}
\item [(a)]$\{K_{3,3},K_{3,3}^{0,1},K_{3,3}^{0,2},K_{3,3}^{1,1}\}$ and
\item [(b)]$\{K_{3,3},K_{3,3}^{0,1},K_{3,3}^{0,2},K_{3,3}^{1,1},K_5\}$.
\end{enumerate}
Moreover, the following families of matroids are $2$-rounded in the class of the regular matroids.
\begin{enumerate}
\item [(c)]$\{M(K_{3,3}),M(K_{3,3}^{0,1}),M(K_{3,3}^{0,2}),M(K_{3,3}^{1,1})\}$ and
\item [(d)]$\{M(K_{3,3}),M(K_{3,3}^{0,1}),M(K_{3,3}^{0,2}),M(K_{3,3}^{1,1}),M(K_5)\}$.
\end{enumerate}
\end{theorem}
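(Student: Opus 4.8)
The plan is to obtain all four parts from a single inductive scheme, with (a) and (b) forming the graphic core that drives (c) and (d). I would first record the easy structural reductions. Every graph in the listed families is simple and $3$-connected; moreover $K_{3,3}$ is a subgraph of each of $K_{3,3}^{0,1}$, $K_{3,3}^{0,2}$ and $\kttuu$, while $K_5$ is the only listed member with no $K_{3,3}$-subgraph. Consequently the hypothesis ``$G$ has an $\F$-minor'' is equivalent to ``$G$ has a $K_{3,3}$-minor'' in parts (a),(c) and to ``$G$ has a $K_{3,3}$- or a $K_5$-minor'' in (b),(d). This already reduces (b) to (a): by the classical fact that a $3$-connected graph with no $K_{3,3}$-minor is planar or isomorphic to $K_5$, a $3$-connected $G$ with a $K_5$-minor but no $K_{3,3}$-minor must be $K_5$, and then $G=K_5\in\F$ trivially contains any two prescribed edges. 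The analogous bookkeeping will reduce (d) to (c), so it remains to prove (a) and (c).

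For (a) I would argue by a minimal counterexample $(G,e,f)$ with respect to $|E(G)|$: thus $G$ is $3$-connected with a $K_{3,3}$-minor, but no member of $\F$ occurs as a minor $H$ of $G$ with $e,f\in E(H)$. Since $M(K_{3,3})$ is $3$-connected and is neither a wheel nor a whirl, Seymour's Splitter Theorem yields an element $g$ for which $G\del g$ or $G/g$ is again $3$-connected and still has a $K_{3,3}$-minor. Were some such $g$ outside $\{e,f\}$, the reduced graph would be a strictly smaller instance retaining $e$ and $f$, and by minimality it would carry an $\F$-minor through $e,f$ that survives in $G$, a contradiction. Hence the only removable elements of $G$ lie in $\{e,f\}$, so at most two elements are removable; by the fan-type refinements of the Splitter Theorem controlling $3$-connected extensions with few removable elements, this confines $G$ to a short list of graphs only slightly larger than $K_{3,3}$.

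The heart of the argument, and the step I expect to be the main obstacle, is this finite but delicate configuration analysis. Fixing a representation of the $K_{3,3}$-minor by six connected branch sets, three for the side $U$ and three for the side $V$, I would classify how $e$ and $f$ meet the decomposition: inside a branch set, joining two branch sets on the same side, or joining opposite sides. In each case the aim is to reroute the branch sets, using $3$-connectivity and Menger-type path arguments, so that $e$ and $f$ become either edges of the displayed $K_{3,3}$ or same-side chords. This is exactly where the extensions are indispensable: a chord joining two branch vertices of $U$ (or of $V$) is absorbed as the extra edge of a $K_{3,3}^{0,1}$, and two such chords as those of $K_{3,3}^{0,2}$ (same side) or of $\kttuu$ (opposite sides), so the resulting minor lies in $\F$ and meets both $e$ and $f$, contradicting the choice of $(G,e,f)$. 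Coping with two simultaneously awkward edges, and verifying that two chords never force a forbidden third chord, is the technical crux.

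For the regular cases I would run the identical Splitter induction inside the minor-closed class of regular matroids: $M(K_{3,3})$ remains a $3$-connected minor that is neither a wheel nor a whirl, so the reduction step and the bound on the number of removable elements go through verbatim, and the branch-set rerouting of (a) transfers once phrased through matroid $3$-connectivity and Tutte's Linking Theorem. Two facts settle the base cases. First, a cographic matroid has only cographic minors whereas $M(K_{3,3})$ is not cographic, so no matroid in play is cographic, and the graphic base cases are exactly those of (a),(b). Second, the sporadic matroid $R_{10}$ satisfies $R_{10}\del g\cong M(K_{3,3})$ for every $g$, so deleting any element other than $e,f$ already lands in $\F$ through $e$ and $f$. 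Part (d) then follows from (c) just as (b) followed from (a), the only non-$M(K_{3,3})$ instance being $M(K_5)$, which trivially captures any two of its own elements.
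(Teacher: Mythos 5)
Your architecture (minimal counterexample, Splitter-type reduction to small instances, finite check, decomposition for the regular cases) has the right shape, but the two load-bearing steps are missing. First, in (a), the passage from ``every element $g$ such that $G\del g$ or $G/g$ is $3$-connected with a $K_{3,3}$-minor lies in $\{e,f\}$'' to ``$G$ is confined to a short list of graphs only slightly larger than $K_{3,3}$'' is unsupported: no ``fan-type refinement of the Splitter Theorem'' gives a bound on $|E(G)|$ from the scarcity of removable elements. What you need at exactly this point is Seymour's roundedness criterion (the paper's Theorem \ref{criterion}: to prove $2$-roundedness it suffices to treat matroids $M$ having an element $e$ with $M\del e$ or $M/e$ in $\F$), and that criterion is the nontrivial engine of the whole method --- it can be cited, but it cannot be conjured from the Splitter Theorem in one sentence. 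Second, even granting that reduction, the finite verification on the resulting small graphs is the actual heart of the matter --- in the paper it is the explicit case analysis of the graphs $G_1,\dots,G_4$ of Figure 3, covering expansions of degree-$4$ and degree-$5$ vertices and all positions of $e$ and $f$ --- and this is precisely the part you set aside as ``the technical crux.'' A proof that defers it has not proved (a).

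For (c) there is a further concrete gap: your list of base cases (graphic, cographic, $R_{10}$) omits the fourth class in Seymour's decomposition theorem for regular matroids, namely the $3$-connected regular matroids with an $R_{12}$-minor. $R_{12}$ is a $12$-element, $3$-connected, non-graphic, non-cographic regular matroid that does have $\F_c$-minors (the paper computes $si(R_{12}/1)\cong M(K_{3,3}^{0,2})$), so it is exactly one of the ``small'' matroids your induction must examine; handling it --- via an explicit binary representation and the orbit $\{1,6,7\}$ under automorphisms --- is the bulk of the paper's proof of (c), and nothing in your proposal addresses it. Likewise, ``the branch-set rerouting of (a) transfers once phrased through matroid $3$-connectivity and Tutte's Linking Theorem'' has no content for non-graphic regular matroids, where there are no branch sets. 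On the credit side: your reduction of (b) to (a), using the classical fact that a $3$-connected non-planar graph with no $K_{3,3}$-minor is isomorphic to $K_5$, is valid and different from the paper (which instead observes that $\kttuu$ is the unique $3$-connected simple coextension of $K_5$); and your treatment of $R_{10}$ is not only correct but repairs a slip in the paper, which asserts that ``by cardinality, $R_{10}$ has no $\F_c$-minor,'' whereas every single-element deletion of $R_{10}$ is isomorphic to $M(K_{3,3})$ --- your remark that deleting any $g\notin\{e,f\}$ yields an $M(K_{3,3})$-minor through $e$ and $f$ is exactly the needed fix. These two observations, however, do not close the gaps above.
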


Seymour~\cite[(7.5)]{Seymour1980}%??? look precisely where s the result
proved that each $3$-connected simple non-planar graph not isomorphic to $K_5$ has a $K_{3,3}$-minor. So, as consequence of Theorem \ref{k33-classes} we have:

\begin{corollary}\label{sec-res}
If $G$ is a $3$-connected simple non-planar graph and $e,f\in E(G)$, then either $G\cong K_5$ or $G$ has a minor $H$ isomorphic to $K_{3,3},K_{3,3}^{0,1},K_{3,3}^{0,2}$ or $K_{3,3}^{1,1}$ such that $e,f\in E(H)$.
\end{corollary}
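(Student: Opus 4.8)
The plan is to derive the corollary directly from Theorem~\ref{k33-classes}(a) together with the quoted theorem of Seymour, splitting on whether $G$ is isomorphic to $K_5$. Write $\F=\{K_{3,3},K_{3,3}^{0,1},K_{3,3}^{0,2},K_{3,3}^{1,1}\}$. Each member of $\F$ is $3$-connected (adding edges to the $3$-connected graph $K_{3,3}$ preserves $3$-connectivity), so $\F$ is an admissible candidate for a $2$-rounded family in the class of all graphs, which is precisely what part (a) asserts.

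First I would dispose of the case $G\cong K_5$, which is literally the first alternative in the conclusion and requires nothing further; so assume $G\not\cong K_5$. Since $G$ is a $3$-connected simple non-planar graph not isomorphic to $K_5$, Seymour's result gives a $K_{3,3}$-minor of $G$, and as $K_{3,3}\in\F$ this is in particular an $\F$-minor of $G$. Now I would invoke $2$-roundedness from Theorem~\ref{k33-classes}(a): the ambient minor-closed class is all graphs, $G$ belongs to it, $G$ is $3$-connected, and $G$ has an $\F$-minor; hence for every $2$-subset $X\cont E(G)$ there is an $\F$-minor $H$ of $G$ with $X\cont E(H)$. Taking $X=\{e,f\}$ (when $e\ne f$) yields a minor $H$ isomorphic to one of $K_{3,3},K_{3,3}^{0,1},K_{3,3}^{0,2},K_{3,3}^{1,1}$ with $e,f\in E(H)$, exactly as required.

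The one point needing a moment's care is the degenerate situation $e=f$, since the definition of $2$-roundedness is phrased for a $2$-\emph{subset} $X$, i.e.\ for two distinct edges. In that case I would choose any auxiliary edge $g\in E(G)\del\{e\}$ — such a $g$ exists because $G$, possessing a $K_{3,3}$-minor, has at least nine edges — and apply the $2$-rounded property to $X=\{e,g\}$; the resulting minor contains $e$, which suffices.

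In short, the entire substance of the corollary is already packaged in Theorem~\ref{k33-classes}(a) and in Seymour's minor theorem, and the deduction itself amounts only to matching the hypotheses of the $2$-rounded definition ($3$-connectivity of $G$, membership of $G$ in the relevant minor-closed class, and $|X|=2$). Consequently the main obstacle is not in this argument but upstream, in establishing the $2$-roundedness claim of the theorem.
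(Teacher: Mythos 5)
Your proposal is correct and follows exactly the paper's route: the paper derives Corollary~\ref{sec-res} precisely by combining Seymour's theorem (every $3$-connected simple non-planar graph not isomorphic to $K_5$ has a $K_{3,3}$-minor) with the $2$-roundedness of $\{K_{3,3},K_{3,3}^{0,1},K_{3,3}^{0,2},K_{3,3}^{1,1}\}$ from Theorem~\ref{k33-classes}(a). Your additional handling of the degenerate case $e=f$ is a reasonable extra precaution that the paper leaves implicit.
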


The next Corollary follows from Theorem \ref{k33-classes}, combined with Bixby's Theorem about decomposition of connected matroids into 2-sums (\cite[Theorem 8.3.1]{Oxley}). To derive the next corollary, instead of Theorem \ref{k33-classes}, we also may use a result of Seymour~\cite{Seymour1985}, which states that $\{U_{2,4},M(K_{3,3}),M(K_{3,3}^{0,1}),M(K_5)\}$ is $1$-rounded.

\begin{corollary}
If $G$ is a non-planar $2$-connected graph and $e\in E(G)$, then $G$ has a minor $H$ isomorphic to $K_5$, $K_{3,3}$ or $K_{3,3}^{0,1}$ such that $e\in E(H)$.
\end{corollary}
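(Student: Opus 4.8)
The plan is to translate the problem into matroid language, settle it there, and pull the answer back to graphs via Whitney's $2$-Isomorphism Theorem. Since $G$ is $2$-connected, its cycle matroid $M:=M(G)$ is connected; being graphic it is binary, so $M$ has no $U_{2,4}$-minor. By Wagner's Theorem the non-planarity of $G$ hands $M$ an $M(K_5)$- or $M(K_{3,3})$-minor. The target then becomes to produce a minor of $M$ isomorphic to one of $M(K_5),M(K_{3,3}),M(K_{3,3}^{0,1})$ whose ground set contains $e$: since each of these is $3$-connected and graphic, Whitney's Theorem pins down its graph uniquely, and the simplicity of the minor forces $e$ to remain an edge of that graph $H$, giving the desired graph minor with $e\in E(H)$.

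For the quick route I would invoke Seymour's result, quoted above, that $\{U_{2,4},M(K_{3,3}),M(K_{3,3}^{0,1}),M(K_5)\}$ is $1$-rounded. As $M$ is connected and carries a minor from this family, applying $1$-roundedness to the singleton $\{e\}\cont E(M)$ returns a minor $N$ of $M$ in the family with $e\in E(N)$. Binariness rules out $N=U_{2,4}$, so $N$ is isomorphic to $M(K_5)$, $M(K_{3,3})$ or $M(K_{3,3}^{0,1})$, and the translation above closes the argument.

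For the route flagged in the statement I would instead lean on Theorem~\ref{k33-classes} and Bixby's $2$-sum decomposition (Theorem~8.3.1 of \cite{Oxley}). First decompose $M$ into an iterated $2$-sum of $3$-connected matroids, circuits and cocircuits; a $3$-connected minor such as $M(K_5)$ or $M(K_{3,3})$ must live inside a single $3$-connected part, so some part $N$ is non-planar. In the favourable case $e\in E(N)$, I would pad $\{e\}$ to a $2$-subset and apply part~(d) of Theorem~\ref{k33-classes} (graphic matroids are regular) to the $3$-connected non-planar $N$, obtaining a minor in $\{M(K_{3,3}),M(K_{3,3}^{0,1}),M(K_{3,3}^{0,2}),M(K_{3,3}^{1,1}),M(K_5)\}$ through $e$; deleting, where needed, one added edge other than $e$ collapses $M(K_{3,3}^{0,2})$ and $M(K_{3,3}^{1,1})$ down to $M(K_{3,3}^{0,1})$ while keeping $e$, landing back in the three-member family.

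The hard part, and exactly what forces $U_{2,4}$ into Seymour's family, is the case where $e$ sits in a part of the decomposition disjoint from every non-planar part. There I would have to route $e$ toward a non-planar $3$-connected part across the intervening $2$-sums, using each basepoint to splice a minimal substructure through $e$ onto the non-planar minor; the binariness of $M$ is precisely what keeps this splicing from manufacturing a $U_{2,4}$ and what guarantees the outcome stays graphic. Managing this routing is the crux, and since Seymour's $1$-rounded statement already packages it, the first route is the more economical one.
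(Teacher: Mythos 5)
Your proposal is correct and takes essentially the same approach as the paper, which explicitly offers this derivation as an alternative: apply Seymour's $1$-roundedness of $\{U_{2,4},M(K_{3,3}),M(K_{3,3}^{0,1}),M(K_5)\}$ to the connected matroid $M(G)$, rule out $U_{2,4}$ because $M(G)$ is binary, and translate the resulting $3$-connected graphic minor through $e$ back to a graph via Whitney's $2$-Isomorphism Theorem. Your sketch of the paper's other suggested route (Bixby's $2$-sum decomposition plus Theorem~\ref{k33-classes}) is left incomplete at the step of routing $e$ across the $2$-sums, but this is immaterial since your first route already constitutes a complete proof.
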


The first result we state at the abstract is Corollary \ref{k5}, that follows from the following theorem:

\begin{theorem}\label{kttuu}
If $G$ is a $3$-connected simple graph with a $K_{3,3}^{1,1}$-minor and $T$ is a triangle of $G$, then $G$ has a $\kttuu$-minor with $E(T)$ as edge-set of a triangle.
\end{theorem}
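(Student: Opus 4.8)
The plan is to argue by a minor-minimal counterexample, using the $2$-roundedness of $\{K_{3,3},K_{3,3}^{0,1},K_{3,3}^{0,2},\kttuu\}$ from Theorem~\ref{k33-classes}(a) together with the Splitter Theorem to peel $G$ down to a situation in which the triangle $T$ is the only obstruction to reducing, and then to finish by reformulating the goal over the smaller graph $K_{3,3}^{0,1}$. The key structural fact I would record first is that deleting either added edge of $\kttuu$ yields $K_{3,3}^{0,1}$, while conversely adding any edge joining two vertices on the edge-free side of $K_{3,3}^{0,1}$ produces a graph isomorphic to $\kttuu$; the new edge, together with the two bipartite edges that meet its endpoints at a common vertex of the opposite side, is then a triangle of $\kttuu$.

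Writing $T=\{e_1,e_2,e_3\}$ with $e_1=bc$, $e_2=ca$, $e_3=ab$, this fact reduces the statement to the following rooted claim: $G$ has a $K_{3,3}^{0,1}$-minor model in which $b$ and $c$ lie in two distinct branch sets $B,C$ of the edge-free side and $a$ lies in a branch set $A$ of the opposite side, with $e_2$ realizing the branch edge $A$--$C$ and $e_3$ the branch edge $A$--$B$. Since $K_{3,3}^{0,1}$ has no edges within the edge-free side, such a model cannot use $e_1=bc$ internally, so re-inserting $e_1$ as the branch edge $B$--$C$ adds an edge on that side and yields a $\kttuu$-minor whose branch sets $A,B,C$ carry $E(T)=\{e_3,e_1,e_2\}$ as a triangle.

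Next I would take $G$ minimizing $|E(G)|$ among the $3$-connected simple graphs with a $\kttuu$-minor and a triangle $T$ for which the conclusion fails, noting $G\ne\kttuu$ since there $H=G$ works. For each edge $e\notin E(T)$ the reductions $G\del e$ and $si(G/e)$ are simple, and since $G$ is simple $e$ is incident to at most one of $a,b,c$, so we may simplify so as to retain $e_1,e_2,e_3$; hence $T$ survives as a triangle. By minimality, any such reduction that is both $3$-connected and retains a $\kttuu$-minor would already furnish the desired rooted $K_{3,3}^{0,1}$-minor, which is then a minor of $G$ as well -- a contradiction. Because $\kttuu$ is $3$-connected, nonplanar, and not a wheel, the Splitter Theorem always offers a reducing edge, so minimality forces every deletion or contraction that preserves both $3$-connectivity and the $\kttuu$-minor to lie in $E(T)$. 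This ``$T$-blocked'' situation, in which the triangle is the unique reducible part of $G$, is exactly the structure I would then analyze directly, using Theorem~\ref{k33-classes}(a) to supply a $K_{3,3}^{0,1}$-minor through the generic part of $G$ and rerouting its model so that the cherry $\{e_2,e_3\}$ sits at a single branch set $A$ opposite to $B\ni b$ and $C\ni c$.

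The step I expect to be the main obstacle is precisely this last one: turning the \emph{unrooted} conclusion of $2$-roundedness -- which only guarantees that $e_2$ and $e_3$ appear somewhere in an $\F$-minor -- into the \emph{rooted} cherry placement required above, all while not disturbing $e_1$. Controlling the $T$-blocked configurations, where every connectivity- and minor-preserving reduction is forced onto $E(T)$, is where the local case analysis around the three vertices $a,b,c$ has to be carried out by hand; the delicate points are ensuring that $e_2$ and $e_3$ can always be realized as a common cherry of a $K_{3,3}^{0,1}$-minor on the correct side, and that the contractions used to build that minor never identify two of $a,b,c$ nor consume $e_1$.
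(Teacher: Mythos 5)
Your opening reduction is sound: a $K_{3,3}^{0,1}$-minor model in which $b,c$ occupy distinct branch sets $B,C$ on the edge-free side, $a$ occupies a branch set $A$ on the opposite side, and $e_2,e_3$ realize the two branch edges at $A$, can indeed be upgraded to a $\kttuu$-minor with $E(T)$ as a triangle by re-inserting $e_1$ as the $B$--$C$ branch edge. Your minimal-counterexample setup is also correct: minimizing $|E(G)|$, every single-edge reduction that preserves $3$-connectivity, simplicity (after simplification, for contractions) and the $\kttuu$-minor must involve an edge of $T$. But the proof stops exactly where the theorem's content begins. The step you yourself flag as ``the main obstacle'' --- converting the unrooted conclusion of $2$-roundedness into the rooted cherry placement inside a ``$T$-blocked'' graph --- is not a detail to be filled in; it is the whole theorem, and your outline supplies no mechanism for it. Two concrete failures: (i) Theorem \ref{k33-classes}(a) only guarantees some minor from the family $\{K_{3,3},K_{3,3}^{0,1},K_{3,3}^{0,2},\kttuu\}$ whose edge set contains $e_2,e_3$; that minor may be $K_{3,3}$ or $K_{3,3}^{0,2}$ rather than $K_{3,3}^{0,1}$, and $e_2,e_3$ may appear in it as non-adjacent edges, or as a cherry centered on the wrong side, and nothing is said about how to repair this while keeping $e_1$ intact. (ii) Your $T$-blocked graphs are not bounded in size or enumerated: the Splitter Theorem cannot be forced to offer a reducing edge outside $T$, since a matroid-contractible edge lies in no triangle (so your minimality argument kills all Splitter contractions), and the theorem may then offer only a deletion of an edge of $T$ itself. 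So minimality leaves you facing a class of graphs of a priori unbounded order with no structure theory for them.

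This is precisely where the paper uses a different and stronger tool, namely the corollary quoted from \cite{Costalonga2}: whenever $|V(G)|-|V(K_5)|\ge 3$, there exist three edges, \emph{not} forming a triangle, each of whose contractions is $3$-connected with a $K_5$-minor. Since $G$ is simple, $cl_{M(G)}(T)=T$, so one of these three edges avoids the triangle, and contracting it contradicts minimality (measured in $|V(G)|$, which is what the contraction-based reduction requires). This bounds a minimal counterexample to at most $7$ vertices, after which the theorem is finished by a direct check at $6$ vertices and an explicit finite case analysis of the $7$-vertex configurations (the paper's Figures 5--7), where the rooted placement of $T$ is verified by hand. Your proposal has no analogue of this vertex-bounding step, and without one --- or without actually carrying out the $T$-blocked case analysis you defer --- the plan cannot be completed as stated.
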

 
\begin{corollary}\label{k5}
If $G$ is a $3$-connected simple graph with a $K_{5}$-minor and $T$ is a triangle of $G$, then $G$ has a $K_5$-minor with $E(T)$ as edge-set of a triangle.
\end{corollary}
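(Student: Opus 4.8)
The plan is to derive Corollary~\ref{k5} from Theorem~\ref{kttuu} by sandwiching $K_5$ between $\kttuu$-minors. First I would dispose of the degenerate case: if $G\cong K_5$ then $G$ itself is the desired $K_5$-minor and $T$ is already the edge-set of a triangle of it, so there is nothing to prove. Henceforth assume $G\not\cong K_5$. Since a $K_5$-minor forces at least $10$ edges on $5$ vertices, $G\not\cong K_5$ together with simplicity gives $|V(G)|\ge 6$, so $M(G)$ is a $3$-connected matroid having $M(K_5)$ as a \emph{proper} minor. The whole point of the argument is then to upgrade the hypothesised $K_5$-minor to a $\kttuu$-minor, so that Theorem~\ref{kttuu} becomes applicable to $T$.

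To produce a $\kttuu$-minor of $G$ I would invoke Seymour's Splitter Theorem~\cite{Seymour1980} applied to $M(K_5)$ and $M(G)$. As $M(K_5)$ is neither a wheel nor a whirl, the theorem furnishes a chain of $3$-connected minors from $M(K_5)$ up to $M(G)$ in which each term is a single-element extension or coextension of the previous one. The first step off $M(K_5)$ cannot be an extension: a $3$-connected matroid has no parallel elements, and on $5$ vertices the edge-maximal simple graph is $K_5$ itself, so there is no simple graphic single-element extension of $K_5$. Hence that step is a coextension, i.e. a vertex split of $K_5$. A routine check shows that the \emph{only} $3$-connected simple graph obtained from $K_5$ by splitting a vertex is $\kttuu$: keeping both new vertices of degree at least $3$ forces one to split a degree-$4$ vertex, distributing its four neighbours as $2+2$ and joining the two halves, and all such splits are isomorphic, producing exactly $\kttuu$ with the two degree-$3$ vertices joined by the new edge. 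Consequently $M(\kttuu)$ is a minor of $M(G)$; that is, $G$ has a $\kttuu$-minor.

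Now I would apply Theorem~\ref{kttuu} to $G$ and $T$, obtaining a $\kttuu$-minor $H'$ of $G$ in which $E(T)$ is the edge-set of a triangle; label $H'$ as in Figure~1. Every triangle of $\kttuu$ uses one of the added edges $u_2u_3$ or $v_2v_3$ together with two edges to a common neighbour, whereas the edge $u_1v_1$ joins the two degree-$3$ vertices and lies in no triangle at all, so $u_1v_1\notin E(T)$. Contracting $u_1v_1$ merges $u_1$ and $v_1$ into a vertex adjacent to $u_2,u_3,v_2,v_3$ and, since $u_1$ and $v_1$ have no common neighbour, creates no parallel edges; thus $H:=H'/u_1v_1$ is isomorphic to $K_5$. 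Because $u_1v_1\notin E(T)$ and no edge is deleted in the simplification, the three edges of $T$ survive and still bound a triangle of $H$, so $H$ is the required $K_5$-minor of $G$.

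I expect the genuine obstacle to be the middle step, namely guaranteeing a $\kttuu$-minor of $G$; the surrounding arguments are bookkeeping once one notes that $u_1v_1$ is the unique triangle-free edge of $\kttuu$ and that its contraction returns $K_5$. Should one wish to avoid the Splitter Theorem, the same $\kttuu$-minor can in principle be extracted by hand from a $K_5$-model of $G$, using $3$-connectivity to locate the extra structure that splits one branch vertex, but that case analysis is longer and less transparent than the splitter route.
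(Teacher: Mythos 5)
Your proposal is correct and takes essentially the same route as the paper: your middle step (the Splitter Theorem argument showing that a $3$-connected simple graph not isomorphic to $K_5$ with a $K_5$-minor has a $K_{3,3}^{1,1}$-minor) is exactly the paper's Lemma~\ref{equivalent-minors}, which the paper proves by the same vertex-splitting analysis. After that, both proofs apply Theorem~\ref{kttuu} to place $E(T)$ in a triangle of a $K_{3,3}^{1,1}$-minor and then contract the unique edge lying in no triangle (your $u_1v_1$) to recover $K_5$ with $T$ intact.
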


Other results about getting minors preserving a triangle were proved by Asano, Nishizeki and Seymour~\cite{Asano}. Truemper~\cite{Truemper} proved that if $G$ has a $K_{3,3}$-minor, and $e$, $f$ and $g$ are the edges of $G$ adjacent to a degree-$3$ vertex, then $G$ has a $K_{3,3}$-minor using $e$, $f$ and $g$.

We define a class $\F$ of $3$-connected matroids to be {\bf $(3,k,l)$-rounded} in $\N$ provided the following property holds: if $M$ is a $3$-connected matroid in $\N$ with an $\F$-minor, $X\cont E(M)$, $|X|=k$ and $r(X)\le l$, then $M$ has an $\F$-minor $N$ such that $X\cont E(N)$ and $N|X=M|X$.

Another formulation for Theorem \ref{kttuu} and Corollary \ref{k5} is that $\{M(K_{3,3}^{1,1})\}$ and $\{M(K_5)\}$ are $(3,3,2)$-rounded in the class of graphic matroids. Costalonga~\cite{Costalonga-Vertically}(in the last comments of the introduction) proved:

\begin{prop}\label{costalonga}Let $2\le l\le k\le 3$. Let $\F$ be a finite family of matroids and $\N$ a class of matroids closed under minors.% such that, up to isomorphisms, for each integer $r$, there are,up to isomorphisms, finitely many matroids in $\N$ with rank up to $r$.
Then, there is a $(3,k,l)$-rounded family of matroids $\F'$ such that each $M\in \F'$ has an $\F$-minor $N$ satisfying $r(M)-r(N)\le k+\lfloor\frac{k-1}{2}\rfloor$.
\end{prop}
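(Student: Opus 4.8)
The plan is to build $\F'$ by a minimal-minor construction and to make roundedness hold almost automatically, concentrating the real difficulty in a rank estimate for the minimal members. I would define $\F'$ to be the collection of all $3$-connected matroids $M'\in\N$ that have an $\F$-minor and admit a witness set $X'\subseteq E(M')$ with $|X'|=k$ and $r(X')\le l$ for which no proper minor $N'$ of $M'$ satisfying $X'\subseteq E(N')$ and $N'|X'=M'|X'$ is simultaneously $3$-connected and equipped with an $\F$-minor; equivalently, $M'$ is minor-minimal among the $3$-connected matroids of $\N$ that have an $\F$-minor and contain $X'$ with the restriction $M'|X'$. By construction every member of $\F'$ is $3$-connected.

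Next I would check roundedness. Let $M\in\N$ be $3$-connected with an $\F'$-minor and let $X\subseteq E(M)$ satisfy $|X|=k$ and $r(X)\le l$. Every member of $\F'$ has an $\F$-minor, so $M$ has one too. The minors $N$ of $M$ that are $3$-connected, have an $\F$-minor, contain $X$, and satisfy $N|X=M|X$ form a nonempty family, since $M$ itself qualifies; choose $N$ minor-minimal among them. As $\N$ is minor-closed we have $N\in\N$, and $r_N(X)=r_M(X)\le l$, so minimality shows that $X$ witnesses $N\in\F'$. Hence $N$ is an $\F'$-minor of $M$ with $X\subseteq E(N)$ and $N|X=M|X$, proving that $\F'$ is $(3,k,l)$-rounded.

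It remains to bound the rank, which is where the work lies. Fix $M'\in\F'$ with witness $X'$ and, among all $\F$-minors $N=M'/C\del D$, choose one with $r(N)$ maximum. Writing $r(M')-r(N)=r_{M'}(C)+\bigl(r(M')-r_{M'}(E(M')\del D)\bigr)$ reduces the problem to bounding the rank absorbed by the contracted set together with the rank lost to deletion. The idea is to push $M'$ down to $N$ by single-element reductions that avoid $X'$: by Seymour's Splitter Theorem, aided by Bixby's Lemma to control $\mathrm{si}(M'\del e)$ and $\mathrm{co}(M'/e)$, such a reduction exists whenever some element of $E(M')\del X'$ can be contracted or deleted while preserving both $3$-connectivity and the $\F$-minor. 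Minimality of $M'$ then forces the process to halt only when every removable element is protected by $X'$, so that beyond $N$ the surviving elements consist of $X'$, which contributes rank at most $l\le k$, together with the connectors that $3$-connectivity requires once $X'$ is re-attached.

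The main obstacle is to pin down the exact constant $k+\lfloor\frac{k-1}{2}\rfloor$, that is, to show that re-attaching the $k$-element set $X'$ in a $3$-connected fashion costs at most $\lfloor\frac{k-1}{2}\rfloor$ extra rank in connectors. I expect to handle $k=2$ and $k=3$ separately and, for $k=3$, to split into subcases according to $r(X')\in\{1,2,3\}$, using Tutte's Linking Theorem both to bound the connectivity between $X'$ and $N$ and to rule out small separations of $M'$ that do not cross $X'$, since any such separation would allow a further reduction and contradict minimality. The guiding heuristic is that the up-to-$k$ loose ends of $X'$ must be paired off by connectors, each connector neutralising two loose ends while contributing one to the rank, which produces the floor term; making this pairing rigorous and excluding the degenerate configurations that might demand an additional connector is the delicate point.
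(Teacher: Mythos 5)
First, a point of reference: the paper you were given does not prove this proposition at all --- it is quoted, without proof, from the introduction of \cite{Costalonga-Vertically} --- so your attempt has to be judged against what a complete proof requires. Your first two steps are correct but carry almost no weight. Defining $\mathcal{F}'$ as the minor-minimal $3$-connected matroids in $\mathcal{N}$ that have an $\mathcal{F}$-minor and contain a witness set $X'$, and verifying roundedness by picking a minor-minimal $N$ inside a given $M$, is sound (and $r_N(X)=r_M(X)$ does follow from $N|X=M|X$). But this part is essentially formal: roundedness is automatic for any family defined by minor-minimality; indeed, even the family of \emph{all} $3$-connected matroids in $\mathcal{N}$ with an $\mathcal{F}$-minor is $(3,k,l)$-rounded for trivial reasons (take $N=M$). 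The entire mathematical content of the proposition is the rank bound $r(M)-r(N)\le k+\lfloor\frac{k-1}{2}\rfloor$ for the members of $\mathcal{F}'$, and that is exactly the step your proposal does not carry out.

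Your third and fourth paragraphs are a heuristic, not an argument, and you say so yourself (``making this pairing rigorous \dots is the delicate point''). The tools you name cannot close the gap: the Splitter Theorem produces a chain from $M'$ down to a $3$-connected minor but gives no control over whether the removed elements hit $X'$ or destroy $M'|X'$; Bixby's Lemma guarantees that $\mathrm{co}(M'\backslash e)$ or $\mathrm{si}(M'/e)$ is $3$-connected but says nothing about preserving the $\mathcal{F}$-minor; and Tutte's Linking Theorem bounds connectivity but does not produce removable elements. What is actually needed is a theorem asserting that whenever the rank gap from $M'$ to every $\mathcal{F}$-minor exceeds $k+\lfloor\frac{k-1}{2}\rfloor$, some element can be contracted (or deleted) so as to \emph{simultaneously} preserve $3$-connectivity up to simplification, preserve an $\mathcal{F}$-minor, and avoid $\mathrm{cl}(X')$ so that the witness survives intact. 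This triple preservation is precisely the subject of the vertically-$N$-contractible-elements machinery of \cite{Costalonga-Vertically}, and of results such as the corollary of \cite{Costalonga2} quoted in Section 2 of this paper, whose proofs are long inductive case analyses; nothing in your outline substitutes for them. Your ``pairing of loose ends by connectors,'' with the degenerate configurations explicitly left unexamined, is a restatement of the claim to be proved rather than a proof of it --- in particular, nothing you wrote rules out that a minimal member of your $\mathcal{F}'$ has rank gap $5$ rather than $4$ when $k=3$. As it stands, the proposal establishes only the trivial half of the proposition.
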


In \cite{Costalonga-Vertically} there are more results of such nature. Although a minimal $(3,3,3)$-rounded family of graphs containing $\{K_5, K_{3,3}\}$ exists and even has a size that allows a computer approach, it has shown to be complicated. Such family must at least include the graphs $K_{3,3}^{i,j}$, for $i+j\le 3$, $K_5$ and the following two graphs in Figure 2, obtained, respectively, from $K_{3,3}$ and $K_{5}$ by the same kind of vertex expansion, which shall occur in such kind of families.
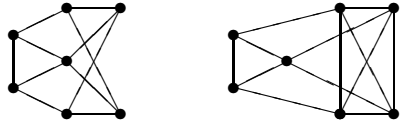
\begin{figure}\caption{Members of a $(3,3,3)$-rounded family containing $\{K_5, K_{3,3}\}$}%fig2
\begin{center}
\begin{picture}(40,40)
\put(0,10){\circle*{4}}\put(20,,0){\circle*{4}}\put(20,20){\circle*{4}}\put(20,40){\circle*{4}}
\put(0,30){\circle*{4}}\put(40,0){\circle*{4}}\put(40,40){\circle*{4}}
\drawline(0,10)(0,30)(20,40)(40,40)(20,20)(40,0)(20,0)(0,10)(20,20)(0,30)\drawline(40,40)(20,0)\drawline(20,40)(40,0)
\end{picture}\qquad\qquad
\begin{picture}(70,40)
\drawline(0,30)(0,10)(40,0)(60,0)(60,40)(40,40)(40,0)(60,40)(20,20)(60,0)(40,40)(0,30)(20,20)(0,10)
\put(0,30){\circle*{4}}\put(0,10){\circle*{4}}\put(40,0){\circle*{4}}\put(60,0){\circle*{4}}
\put(60,40){\circle*{4}}\put(40,40){\circle*{4}}\put(40,0){\circle*{4}}\put(20,20){\circle*{4}}
\end{picture}
\end{center}
\end{figure}
\section{Proofs for the Theorems}
The proof of Theorem \ref{k33-classes} is based on the following theorem:

\begin{theorem}\label{criterion}(Seymour~\cite{Seymour1985}, see also \cite[Theorem 12.3.9]{Oxley}) Let $\N$ be a class of matroids closed under minors, and $\F$ be a family of $3$-connected matroids. If, for each matroid $M$, for each $e\in E(M)$ such that $M/e\in\F$ or $M\del e \in \F$ and for each $f\in E(M)-e$ there is an $\F$-minor using $e$ and $f$, then $\F$ is $2$-rounded in $\N$.
\end{theorem}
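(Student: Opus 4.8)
The plan is to argue by a minimal counterexample and to peel off one element at a time until I reach the situation that the hypothesis is tailored for, namely a single-element lift of a member of $\F$. Suppose the conclusion fails and choose a triple $(M,e,f)$ with $M$ a $3$-connected matroid in $\N$ having an $\F$-minor, with $e,f\in E(M)$ distinct, and with $M$ having no $\F$-minor whose ground set contains $\{e,f\}$, such that $|E(M)|$ is as small as possible; among the $\F$-minors of this $M$ fix one, $N$, with $|E(N)|$ maximum. Since every member of $\F$ is $3$-connected, $N$ is $3$-connected (and so has at least four elements), and $M\notin\F$, for otherwise $M$ itself would be an $\F$-minor containing $\{e,f\}$. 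Hence $N$ is a proper minor of $M$.

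First I would dispose of the base case $|E(M)|=|E(N)|+1$. Here there is a single $g\in E(M)$ with $M\del g=N$ or $M/g=N$, so $M$ is a single-element lift of $\F$. If $g\notin\{e,f\}$, then $N\in\F$ is itself an $\F$-minor containing $\{e,f\}$, contradicting the choice of $M$; hence $g\in\{e,f\}$, say $g=e$. Then $M\del e\in\F$ or $M/e\in\F$, and the hypothesis, applied to $M$ with the distinguished element $e$ and the further element $f$, yields an $\F$-minor of $M$ using $e$ and $f$ --- a contradiction. Thus $|E(M)|\ge|E(N)|+2$.

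The heart of the argument is the reduction step for $|E(M)|\ge|E(N)|+2$. Here I would invoke the Splitter Theorem together with Tutte's Wheels-and-Whirls Theorem: provided $M$ is neither a wheel nor a whirl, $M$ has a single-element deletion or contraction $M'$ that is again $3$-connected and still has $N$ (hence an $\F$-minor) as a minor. Then $M'\in\N$ is $3$-connected with an $\F$-minor and $|E(M')|<|E(M)|$, so by minimality $M'$ is not a counterexample. If the removed element lies outside $\{e,f\}$, then $\{e,f\}\subseteq E(M')$ and the $\F$-minor of $M'$ through $\{e,f\}$ is also a minor of $M$, a contradiction. Hence every such $3$-connected, $N$-preserving single-element reduction of $M$ must delete or contract $e$ or $f$. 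To finish I would use the quantitative form of the Splitter Theorem, which (when $|E(M)|\ge|E(N)|+2$ and $M$ is not a wheel or whirl) guarantees that $M$ has several removable elements of this kind; since at most two of them can be $e$ or $f$, some removable element avoids $\{e,f\}$, and the previous contradiction applies.

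The main obstacle is exactly this control over which element is removed: the plain Splitter Theorem returns one $3$-connected reduction but does not let one avoid two prescribed elements, so the delicate point is to show that enough removable elements exist to dodge both $e$ and $f$ --- or else to extract the required single-element lift directly from the local circuit/cocircuit structure around $e$ and $f$, for instance via Bixby's Lemma applied to the cosimplification $\mathrm{co}(M\del e)$ and the simplification $\mathrm{si}(M/e)$. The remaining loose end is the wheel/whirl case excluded above, in which $M$ has \emph{no} $3$-connected single-element reduction at all; there smaller wheels and whirls are reached by deleting and contracting a rim/spoke pair, and the abundant symmetry of $M$ should let one choose such a pair preserving both $e$ and $f$, after which the base case applies. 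Pinning down the exact count of removable elements and clearing these exceptional configurations is where the real work of the proof lies.
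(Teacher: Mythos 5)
There is nothing in the paper to compare your argument against: the paper does not prove Theorem \ref{criterion}, it attributes it to Seymour~\cite{Seymour1985} (see also Theorem 12.3.9 of \cite{Oxley}) and merely remarks that Seymour's proof for the class of all matroids carries over to a minor-closed class $\N$. Judged on its own merits, your skeleton is reasonable --- the base case $|E(M)|=|E(N)|+1$ is exactly right, and the wheel/whirl reduction can be completed as you sketch (though it is the minimality of the counterexample, not the base case, that finishes there) --- but the central step rests on a ``quantitative form of the Splitter Theorem'' that does not exist: the statement you need is false. Take $M=M(\kttuu)$ and $N=M(K_{3,3})$, so that $M$ is $3$-connected, is neither a wheel nor a whirl, and $|E(M)|=|E(N)|+2$. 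Exactly two elements $g$ of $M$ have the property that $M\del g$ or $M/g$ is $3$-connected with an $N$-minor, namely $u_2u_3$ and $v_2v_3$. Indeed: deleting an edge incident to $u_1$ or $v_1$ creates a series pair; deleting any $u_iv_j$ with $i,j\in\{2,3\}$ leaves a rank-$5$, $10$-element matroid in which any $M(K_{3,3})$-minor would have to be a $9$-edge spanning subgraph (the ranks agree, so no contractions are available), which is impossible because the two edge-disjoint triangles $u_2u_3v_1$ and $u_1v_2v_3$ survive any single further deletion while $K_{3,3}$ is triangle-free; contracting any edge lying in a triangle creates a parallel pair; and contracting the unique edge in no triangle, $u_1v_1$, yields $M(K_5)$, which has no $M(K_{3,3})$-minor. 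So with $\{e,f\}=\{u_2u_3,v_2v_3\}$ there is no removable element dodging $e$ and $f$, and your induction halts.

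This configuration is not an exceptional nuisance to be cleared by a better count of removable elements; it is the heart of the theorem. The case in which every $3$-connectivity- and minor-preserving single-element reduction meets $\{e,f\}$ is precisely where a family is forced to acquire new members in order to be $2$-rounded --- it is why family (a) of this paper contains $K_{3,3}^{0,1}$, $K_{3,3}^{0,2}$ and $\kttuu$ alongside $K_{3,3}$ --- so any correct proof must confront it by using the hypothesis of the theorem in an essential way rather than by avoiding it. Seymour's argument does exactly this: its engine is his structural theorem that a $3$-connected matroid with a $3$-connected minor $N$ ($|E(N)|\ge 4$) and a prescribed element $x$ has a $3$-connected minor containing $x$, having an $N$-minor, and having at most $|E(N)|+2$ elements; this localizes the problem to matroids at most two elements larger than $N$, where the hypothesis is applied to single-element lifts that need not be $3$-connected. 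That, incidentally, is why the hypothesis of Theorem \ref{criterion} ranges over \emph{all} matroids $M$ rather than only $3$-connected ones --- a feature your argument never uses, and a sign that the intended proof is of a different shape. Your fallback suggestion via Bixby's Lemma is too undeveloped to assess: it produces a $3$-connected $co(M\del e)$ or $si(M/e)$, but gives no control over an $\F$-minor through both $e$ and $f$. As you note yourself, the part you defer is where the real work lies; as it stands, the proposal does not prove the theorem.
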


Seymour proved Theorem \ref{criterion} when $\N$ is the class of all matroids. But the same proof holds for this more general version. By Whitney's 2-isomorphism Theorem, the analogous for graphs of Theorem \ref{criterion} holds. \\

\begin{proofof}\emph{Proof of theorem \ref{k33-classes}: }
For items (a) and (b) we will consider $\N$ as the class of graphic matroids and for items (c) and (d) we will consider $\N$ as the class of regular matroids. In each item we will verify the criterion given by Theorem \ref{criterion}.

First we prove item (a). We begin looking at the $3$-connected simple graphs $G$ such that $G\del e \in \F_a:=\{K_{3,3},K_{3,3}^{0,1},K_{3,3}^{0,2},K_{3,3}^{1,1}\}$. We may assume   that $G\notin \F_a$. So, up to isomorphisms, $G=K_{3,3}^{0,3}$ or $G=K_{3,3}^{1,k}$ for some $k\in \{1,2,3\}$. Thus $e\notin E(K_{3,3})$. Define $H:=G[E(K_{3,3})\u \{e,f\}]$. If $f\in E(K_{3,3})$, then $H\cong K_{3,3}^{0,1}$, otherwise $H\cong K_{3,3}^{0,2}$ or $H\cong K_{3,3}^{1,1}$. Thus $H$ is an $\F_a$-minor of $G$ and we may suppose that $G/e\in \F_a$.

e have that $G$ is $3$-connected and simple, in particular, $G$ has no degree-$2$ vertices, hence $G$ must be obtained from $G/e$ by the expansion of a vertex with degree at least $4$. This implies that $G\ncong K_{3,3}$. Thus, we may assume that $G/e$ is one of graphs $K_{3,3}^{0,1}$, $K_{3,3}^{1,1}$ or $K_{3,3}^{0,2}$. We denote $e:=w_1w_2$.
\begin{figure}
\begin{center}\caption{}
\begin{picture}(40,60)
\ktqvertices\labelpata
\drawline(40,60)(10,10)(40,0)(40,20)(40,40)(10,30)(40,0)(10,50)(40,40)
\drawline(10,30)(40,60)(10,50)
\drawline(10,10)(40,20)
\put(22,-12){$G_1$}
\end{picture}\hspace{1.5cm}
\begin{picture}(40,60)
\ktqvertices\labelpata
\drawline(40,60)(10,10)(40,0)(40,20)(40,40)(10,30)(40,0)(10,50)(40,40)
\drawline(10,30)(40,60)(10,50)
\drawline(10,10)(40,20)\drawline(40,40)(40,60)
\put(22,-12){$G_2$}
\end{picture}\hspace{1.5cm}
\begin{picture}(40,75)
\ktqvertices\labelpata%{\put(-4,10){$u_3$}\put(-4,30){$u_2$}\put(-4,50){$u_1$}\put(42,0){$v_3$}\put(42,20){$w_2$}\put(45,40){$w_1$}\put(42,60){$v_1$}}
\drawline(40,60)(10,10)(40,0)(40,20)(40,40)(10,30)(40,0)(10,50)(40,40)
\drawline(10,30)(40,60)(10,50)
\qbezier(40,20)(75,50)(40,60)
\drawline(10,10)(40,40)
\put(22,-12){$G_3$}
\end{picture}\hspace{1.5cm}
\begin{picture}(40,75)
\ktqvertices\labelpata
\drawline(40,60)(10,10)(40,0)(40,20)(40,40)(10,30)(40,0)(10,50)(40,40)
\drawline(10,30)(40,60)(10,50)
\qbezier(40,20)(75,50)(40,60)
\drawline(10,10)(40,20)
\put(22,-12){$G_4$}
\end{picture}
\end{center}
\end{figure}

If $G/e= K_{3,3}^{0,1}$, then $G$ is obtained from $G/e$ by the expansion of a degree-~$4$ vertex. In this case we may assume without losing generality that $G$ is the graph $G_1$, defined in Figure 3. Note that, in this case, $G_1/u_3w_2\cong K_{3,3}$ and that $G_1/u_3v_1\cong K_{3,3}^{0,1}$(with $\{u_1,u_2,w_2\}$ stable). So, one of $G_1/u_3w_2$ or $G_1/u_3v_1$ is an $\F_a$-minor we are looking for. So we may assume that $G\neq K_{3,3}^{0,1}$.

If $G\cong K_{3,3}^{1,1}$, then $G\cong G_1+ u_2u_3$ and the result follows as in the preceding case. Hence we may assume that $G/e\cong K_{3,3}^{0,2}$.

If $G$ is obtained from $G/e$ by the expansion of a degree-$4$ vertex, then $G\cong G_2\cong G_1+v_1w_1$. In this case we may proceed as in the first case again.

Thus, if $G/e=K_{3,3}^{0,2}$, we can assume that $G$ is obtained from $G/e$ by the expansion of the degree-$5$ vertex. If $\{v_1w_1,v_3w_2\}$ or $\{v_1w_2,v_3w_1\}$ is contained in $E(G)$, then $G$ is again isomorphic to $G_2$ and we are reduced to the first case again. Without loss of generality, say that $v_1w_2,v_2w_2\in E(G)$. Then $G$ is one of the graphs $G_3$ or $G_4$ in Figure 3. If $G=G_3$, then one of $G_3/v_1w_2$ or $G_3/w_2v_3$, both isomorphic to $K_{3,3}^{0,2}$ is the $\F_a$-minor we are looking for. If $G=G_4$, then one of $si(G_4/u_3w_2)$ ($\cong K_{3,3}^{0,1}$) or $si(G_4/u_3v_1)$($\cong K_{3,3}^{0,1}$, with $\{u_1,u_2,w_2\}$ stable) is such an $\F_a$-minor. This proves item (a).

Now we prove item (b). We just have to examine the $3$-connected simple single-element extensions and coextensions of $K_5$, since other verifications were made in the proof of item (a). The unique graph $G$ with an edge $e$ such that $G/e\cong K_5$ or $G\del e \cong K_5$ is $K_{3,3}^{1,1}$ (up to isomorphisms). So, we have item (b).

Now we prove item (c). By the proof of item (a), it is just left to examine the $3$-connected extensions and coextensios of the matroids in $\F_c:=\{M(K_{3,3}),$ $M(K_{3,3}^{0,1}),$ $M(K_{3,3}^{0,2}),$ $M(K_{3,3}^{1,1})\}$ which are not graphic. By \cite[Theorem 13.1.2 and Proposition 12.2.8]{Oxley}, each $3$-connected regular matroid is graphic, cographic, isomorphic to $R_{10}$ or has a $R_{12}$-minor. But no cographic matroid has a minor in $\F_c$. Moreover, by cardinality, $R_{10}$ also has no $\F_c$-minor. So, the unique non-graphic matroids $M$ such that $M\del e$ or $M/e$ is possibly in $\F_c$ are those with $R_{12}$-minors. Specifically, by cardinality and rank the unique non-graphic matroid that possibly have a single element deletion or contraction in $\F_c$ is $R_{12}$, up to isomorphisms. Usually $R_{12}$ is defined as the matroid represented over $GF(2)$ by the following matrix:
$$B:=\bordermatrix{
&1&2&3&4&5&6&7&8&9&10&11&12\cr
z_1&1&0&0&0&0&0&1&1&1&0&0&0\cr
z_2&0&1&0&0&0&0&1&1&0&1&0&0\cr
z_3&0&0&1&0&0&0&1&0&0&0&1&0\cr
z_4&0&0&0&1&0&0&0&1&0&0&0&1\cr
z_5&0&0&0&0&1&0&0&0&1&0&1&1\cr
z_6&0&0&0&0&0&1&0&0&0&1&1&1}$$
Now, we build a representation of $si(R_{12}/1)$ as follows. First, we eliminate the first row and column of $B$ and eliminate column $9$, that became equal column $5$, after that, we add row $z_5$ to row $z_6$ and, finally, we add an extra row $z_7$ equal to the sum of the other rows. So we get the matrix $A$, defined next:
\begin{wrapfigure}[6]{r}[-30pt]{3cm}\caption{}\medskip
%\begin{minipage}{2.6cm}
%\begin{center}
\begin{picture}(50,90)
\put(15,5){\circle*{4}}\put(0,5){$z_6$}\put(15,45){\circle*{4}}\put(0,45){$z_4$}
\put(15,85){\circle*{4}}\put(0,85){$z_3$}\put(70,5){\circle*{4}}\put(75,5){$z_ 5$}
\put(70,45){\circle*{4}}\put(75,45){$z_7$}\put(70,85){\circle*{4}}\put(75,85){$z_2$}
\drawline(70,85)(70,45)\put(70,65){2}\drawline(15,85)(70,45)\put(33,72){3}
\drawline(15,45)(70,45)\put(27,44){4}\drawline(15,5)(70,5)\put(41,-3){5}
\drawline(15,5)(70,45)\put(33,11){6}\drawline(15,85)(70,85)\put(41,85){7}
\drawline(15,45)(70,85)\put(18,51){8}\drawline(15,5)(70,85)\put(13,17){10}
\drawline(15,85)(70,5)\put(12,68){11}\drawline(15,45)(70,5)\put(13,31){12}
\end{picture}
%\end{center}
%\end{minipage}
\end{wrapfigure}
%\begin{minipage}{8cm}
$$A:=\bordermatrix{
     &2&3&4&5&6&7&8&10&11&12\cr
z_2&1&0&0&0&0&1&1&1&0&0\cr
z_3&0&1&0&0&0&1&0&0&1&0\cr
z_4&0&0&1&0&0&0&1&0&0&1\cr
z_5&0&0&0&1&0&0&0&0&1&1\cr
z_6&0&0&0&1&1&0&0&1&0&0\cr
z_7&1&1&1&0&1&0&0&0&0&0
}$$%\end{minipage}
\medskip\\

Note that $ R_{12}/1\del 9\cong si(R_{12}/1)\cong R_{12}/1\del 5$ is the cycle matroid of a graph in Figure 4. Now, observe that, inverting the order of the rows in matrix $B$ give us an automorphism $\phi$ of $R_{12}$ such that $\phi(1)=6$. Moreover $R_{12}$ is self dual, where an isomorphism between $R_{12}$ and $R_{12}\s$ takes $1$ into $7$. So $\{1,6,7\}$ is in a orbit of the automorphism group of $R_{12}$. Thus $so(R_{12}/1)\cong si(R_{12}/6)\cong si(R_{12}/7)\cong M(K_{3,3}^{0,2})$ and the ground set of one of these matroids can be chosen containing $\{e,f\}$. Therefore, for each pair of elements of $R_{12}$, there is an $\F_c$-minor containing both. This proves item (c).

To prove item (d) we observe that if $M/e=M(K_5)$ or $M\del e \cong (K_5)$, then $|E(M)|=11$, so $M$ is not isomorphic to $R_{10}$ neither has an $R_{12}$-minor. Moreover $M$ is not cographic in this case. So, all matroids we have to deal are graphic, and the proof of item (d) is reduced to item (b).
\end{proofof}

\begin{lemma}\label{equivalent-minors}Let $G$ be a $3$-connected simple graph not isomorphic to $K_5$. Then $G$ has a $K_5$-minor if and only if $G$ has a $\kttuu$-minor.
\end{lemma}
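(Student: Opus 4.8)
The plan is to prove the two implications separately, with essentially all the work in the forward direction ($K_5$-minor $\Rightarrow\kttuu$-minor). For the reverse implication I would first observe that $\kttuu$ itself contracts onto $K_5$: contracting the edge $u_1v_1$ identifies two degree-$3$ vertices having no common neighbour, so no parallel edges are created, and the resulting five-vertex graph on $\{u_2,u_3,v_2,v_3\}$ together with the new vertex is readily checked to be $K_5$ (the edges $u_2u_3$ and $v_2v_3$ supply exactly the two adjacencies that $K_{3,3}$ lacks). Hence $K_5$ is a minor of $\kttuu$, and by transitivity of the minor order any graph with a $\kttuu$-minor has a $K_5$-minor; note this direction needs neither $3$-connectivity nor $G\not\cong K_5$.

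For the forward implication the main tool will be Seymour's Splitter Theorem applied to cycle matroids. Assuming $G$ is $3$-connected and simple with a $K_5$-minor and $G\not\cong K_5$, a short edge count gives $|E(G)|\ge 11>10=|E(K_5)|$: if $|V(G)|=5$ then a $K_5$-minor forces $G=K_5$, while if $|V(G)|\ge 6$ then either some branch set of a $K_5$-model contains an internal edge or a sixth vertex contributes edges outside the model, in both cases an edge beyond the ten of $K_5$. Thus $M(G)$ is a $3$-connected matroid properly containing the $3$-connected minor $M(K_5)$, which is neither a wheel nor a whirl. The Splitter Theorem then produces a chain of $3$-connected minors $G=G_0,G_1,\dots,G_m=K_5$ in which each $G_i$ is obtained from $G_{i-1}$ by a single-element deletion or contraction; since minors of graphic matroids are graphic and a $3$-connected graphic matroid determines a simple $3$-connected graph, every $G_i$ may be taken to be such a graph. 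I would then concentrate on the penultimate graph $G_{m-1}$, a $3$-connected simple graph with $G_{m-1}\del e\cong K_5$ or $G_{m-1}/e\cong K_5$ for some edge $e$.

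The crux, and the step I expect to be the main obstacle, is identifying $G_{m-1}$. The deletion case is impossible, since $M(G_{m-1})\del e\cong M(K_5)$ would make $G_{m-1}$ a connected simple graph on five vertices with eleven edges, exceeding $\binom{5}{2}$. In the contraction case $G_{m-1}/e\cong K_5$, so $G_{m-1}$ arises from $K_5$ by splitting one vertex $w$ into an edge; $3$-connectivity forces both new vertices to have degree at least $3$, and the absence of parallel elements in the $3$-connected matroid $M(G_{m-1})$ forces them to share no neighbour, so the four neighbours of $w$ are split as $2+2$. A direct check then shows the unique outcome is $G_{m-1}\cong\kttuu$; this is precisely the uniqueness already used in the proof of Theorem \ref{k33-classes}(b). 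As $G_{m-1}$ is a minor of $G$, it follows that $G$ has a $\kttuu$-minor, completing the harder direction. Phrasing the case analysis matroidally (so that forbidden parallel elements simultaneously kill the deletion case and the overlapping-split contractions) is the cleanest way to avoid a tedious enumeration of six-vertex, eleven-edge graphs.
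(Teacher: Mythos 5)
Your proposal is correct and takes essentially the same route as the paper: the reverse direction via $K_{3,3}^{1,1}/u_1v_1\cong K_5$, and the forward direction via the Splitter Theorem, ruling out the deletion case by an edge count and then identifying the unique simple $3$-connected coextension of $K_5$ as $K_{3,3}^{1,1}$ (the step the paper leaves as ``easy to verify,'' which you fill in with the $2{+}2$ vertex-splitting analysis). One small phrasing slip worth fixing: the exclusion of a common neighbour of the two split vertices comes from the absence of parallel elements in $M(G_{m-1})/e\cong M(K_5)$, not in $M(G_{m-1})$ itself.
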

\begin{proof}
If $G$ has a $\kttuu$-minor, then $G$ has a $K_5$-minor, because $K_5\cong K_{3,3}^{1,1}/u_1v_1$. In other hand, suppose that $G$ has a $K_5$-minor. By the Splitter Theorem (Theorem 12.1.2 of \cite{Oxley}), $G$ has a $3$-connected simple minor $H$ with an edge $e$ such that $H/e\cong K_5$ or $H\del e\cong K_5$. But no simple graph $H$ has an edge $e$ such that $H\del e\cong K_5$. So $H/e\cong K_5$. Now, it is easy to verify that $H\cong\kttuu$ and conclude the lemma.
\end{proof}

The next result is Corollary 1.8 of \cite{Costalonga2}.
\begin{corollary}\label{costalonga}
Let $G$ be a simple $3$-connected graph with a simple $3$-connected minor $H$ such that $|V(G)|-|V(H)|\geq3$. Then there is a $3$-subset $\{x,y,z\}$ of $E(G)$, which is not the edge-set of a triangle of $G$, such that $G/x$, $G/y$, $G/z$ and $G/x,y$ are all $3$-connected graphs having $H$-minors.
\end{corollary}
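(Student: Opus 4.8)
The plan is to analyse which single-edge contractions of $G$ preserve both $3$-connectivity and the $H$-minor. Call an edge $e$ of a $3$-connected graph $K$ (with $3$-connected minor $H$) \emph{vertically $H$-contractible} if $K/e$ is $3$-connected and still has an $H$-minor. The backbone of the argument is an existence lemma: if $|V(K)|>|V(H)|$ then $K$ has a vertically $H$-contractible edge. To prove it I would fix a minor model of $H$ in $K$ --- a partition of $V(K)$ into connected branch sets together with a choice of deleted edges. Contracting any edge internal to a branch set preserves the $H$-minor, and since $|V(K)|>|V(H)|$ such internal edges exist (a spanning forest of the branch sets already contributes $|V(K)|-|V(H)|$ of them). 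The real work is to make one internal edge also contractible in Tutte's sense. For this I would invoke the Splitter Theorem (Theorem~12.1.2 of~\cite{Oxley}), which --- modulo its wheel exceptions, handled separately --- yields a chain of $3$-connected graphs with $H$-minors from $H$ up to $K$ in which every step is an edge addition or a vertex split, and in which at least one step is a vertex split because $|V(K)|>|V(H)|$. Un-splitting a vertex is a contraction, so a vertex split at the top of the chain gives the edge I want; combining this with Thomassen's theorem (every $3$-connected graph on at least five vertices has a contractible edge) and with the freedom in choosing the branch sets disposes of the case where the top step is an edge addition.

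With the lemma available I would harvest from $G$ a supply $S$ of vertically $H$-contractible edges, strengthening the lemma quantitatively so that the $|V(G)|-|V(H)|\ge 3$ internal forest edges yield several genuinely vertically contractible ones; here the hypothesis $|V(G)|-|V(H)|\ge 3$ is essential, guaranteeing at least three independent minor-preserving contractions. I then take $x$ to be any member of $S$. For the pair condition I would choose $y\in S$ incident with $x$ and verify that $y$ remains vertically $H$-contractible in $G/x$, so that $G/x$, $G/y$ and $G/\{x,y\}=(G/x)/y$ are all $3$-connected with $H$-minors; the standard description of non-contractible edges through $3$-vertex-cuts turns this into a local check around the $3$-separations meeting $x$ and $y$. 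The edge $z$ is a further member of $S$, chosen to defeat the triangle condition.

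To guarantee that $\{x,y,z\}$ is \emph{not} the edge-set of a triangle I would use that contracting the three edges of a triangle lowers the vertex count by only two, whereas the surplus $|V(G)|-|V(H)|\ge 3$ forces three independent vertex-reducing contractions; hence $S$ cannot be confined to a single triangle, and spreading $x,y,z$ over at least four vertices produces a non-triangular triple (a putative obstruction would trap $S$ inside one triangle and its closed neighbourhood, exposing a $3$-separation of $G$ and contradicting $3$-connectivity). The step I expect to be the genuine obstacle is the vertical-contractibility lemma, and precisely its edge-addition case: reconciling Tutte--Thomassen contractibility with the demand that the contracted edge be internal to a branch set --- equivalently, securing a contraction step in $G$ itself rather than only after a run of deletions --- is where the argument needs real structural input rather than bookkeeping over the Splitter chain. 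Enforcing the sequential-pair and non-triangle conditions simultaneously is a secondary difficulty that the counting above should settle.
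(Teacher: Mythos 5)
This statement is not proved in the paper at all: it is imported as Corollary~1.8 of \cite{Costalonga2}, i.e.\ it is the end product of a separate, full-length paper, whose proof rests on a detailed matroid connectivity analysis extending Whittle's theorem (if $M$ is $3$-connected with a $3$-connected minor $N$ and $r(M)>r(N)$, then some element $x$ has $si(M/x)$ $3$-connected with an $N$-minor). So your attempt is not being compared with a short in-paper argument; you are effectively trying to recreate the cited paper, and the sketch falls well short of that.

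The gap you flag yourself is genuine and, as it stands, fatal: the Splitter Theorem gives a chain from $H$ up to $G$ whose steps are edge additions or vertex splits, but nothing lets you arrange a vertex split at the top of the chain. When the final steps are edge additions you only learn that $G$ has \emph{deletions} preserving $3$-connectivity and the $H$-minor; Thomassen's contractible edge need not preserve the $H$-minor, and ``freedom in choosing branch sets'' has no mechanism to reconcile Tutte contractibility with minor preservation. That reconciliation is exactly why the existence statement is a theorem of Whittle with a nontrivial proof rather than Splitter bookkeeping. Moreover, even granting that lemma, the second half of your plan does not go through: iterating the lemma produces edges contractible in $G/x$, in $(G/x)/y$, etc., and such edges need \emph{not} be vertically $H$-contractible in $G$ itself, whereas the corollary demands that $x$, $y$, $z$ each work in $G$, that the pair $x,y$ work simultaneously, and that $\{x,y,z\}$ not be a triangle. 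Your counting argument (``a triangle drops the vertex count by only two, while the surplus is three'') compares the wrong quantities: $|V(G)|-|V(H)|\ge 3$ measures how far $G$ is from $H$, but it gives no lower bound on, nor any structural control of, the set of vertically $H$-contractible edges of $G$, which for all your argument shows could be exactly one triangle. Excluding that, and securing the sequential pair condition, is precisely the content of \cite{Costalonga2}, not a consequence of $3$-connectivity via the ``trapped $3$-separation'' heuristic you invoke.
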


\begin{proofof}\emph{Proof of Theorem \ref{kttuu}: }
Suppose that $G$ and $T$ is a counter-example to the theorem minimizing $|V(G)|$. If $|V(G)|\ge 8$, by Corollary \ref{costalonga} applied to $G$ and $K_5$, $G$ has an edge $e$ such that, $e\notin cl_{M(G)}(T)$ and $G/e$ is $3$-connected and have a $K_5$-minor. Thus $si(G/e)$ is a $3$-connected simple graph having $T$ as triangle. By Lemma \ref{equivalent-minors}, $si(G/e)$ has a $\kttuu$-minor, contradicting the minimality of $G$. Thus $|V(G)|\le 7$. If $|V(G)|=6$, then $G\cong K_{3,3}^{i,j}$ for some $1\le i\le j \le 3$. In this case, the Theorem can be verified directly. Thus $|V(G)|=7$.

So, there is $e\in E(G)$ and $X\cont E(G)$ such that $G\del X/e \cong K_{3,3}^{1,1}$. If $e\notin T$, $si(G/e)$ contradicts the minimality of $T$, so $e\in T$. We split the proof into two cases now.

The first case is when $e$ is adjacent to a degree-$2$ vertex $v$ of $G\del X$. Let $f$ be the other edge adjacent to $v$ in $G\del X$. So $e,f\in T$, otherwise, $si(G/f)$ would contradict the minimality of $G$.  

Up to isomorphisms, $G\del X$ can be obtained from $K_{3,3}^{1,1}\cong G\del X/e$ by adding the vertex $v$ in the middle of some edge $e'$. By symmetry, we may assume that $e'\in\{u_1v_2,v_2v_3,u_1v_1\}$. So, there are, up to isomorphisms, three possibilities for $G\del (X-T)$, those in Figure 5.
\begin{figure}
\begin{center}\caption{}
\begin{picture}(50,50)
\kttvertices\kttuuarestas
\put(50,50){\circle*{4}}\put(51,48){$v$}
\drawline(10,40)(50,50)(30,20)
\put(17,-12){$G_1$}
\end{picture}\hspace{1cm}
\begin{picture}(50,50)
\kttvertices\kttuuarestas
\put(50,10){\circle*{4}}\put(50,10){$v$}
\drawline(30,0)(50,10)(30,20)
\put(17,-12){$G_2$}
\end{picture}\hspace{1cm}
\begin{picture}(50,50)
\kttvertices\kttuuarestas
\put(20,50){\circle*{4}}\put(21,50){$v$}
\drawline(10,40)(20,50)(30,40)
\put(17,-12){$G_3$}
\end{picture}
\end{center}
\end{figure}
Since $G$ is simple, $G$ has a third edge $g$ adjacent to $v$. For any of the graphs in Figure 5, it verifies that $si(G\del(X-T)/g)$ contradicts the minimality of $G$. So the proof is done in the first case.

In the second case, $e$ is an edge of $G\del X$ whose adjacent vertices has degree at least $3$. We may suppose that the end-vertices $w_1$ and $w_2$ of $e$ collapses into $v_2$ when contracting $e$ in $G\del X$.  Let $S$ be the set of edges incident to $v_2$ in $G\del X/e$. We also may assume that $w_2$ is adjacent to $v_3$ in $G\del X$. With this assumptions $G\del (X\u S)$ is the graph $G_4$ of Figure 6. Note also that $G\del X$ is obtained from $G_4$ adding $3$ edges, each incident to a different vertex in $\{u_1,u_2,u_3\}$, two of then incident to $w_1$ and one incident to $w_2$. Since switching $u_2$ and $u_3$ in $G_4$ induces an automorphism, we may suppose that $u_2w_1\in E(G\del X)$. Then, without losing generality, $G\del X$ is one of the graphs $G_5$ or $G_6$ in Figure 6.

\begin{figure}[h]
\begin{center}\caption{}\medskip%fig6
\begin{picture}(40,60)
\ktqvertices\labelpata
\drawline(40,60)(10,30)(10,10)(40,0)(10,30)
\drawline(40,40)(40,20)(40,0)(10,50)(40,60)(10,10)
\put(22,-12){$G_4$}
\end{picture}\hspace{2cm}
\begin{picture}(40,60)
\ktqvertices\edgepata\labelpata
\drawline(10,50)(40,20)
\drawline(10,10)(40,40)
\put(22,-12){$G_5$}
\end{picture}\hspace{2cm}
\begin{picture}(40,60)
\ktqvertices\edgepata\labelpata
\drawline(10,50)(40,40)
\drawline(10,10)(40,20)
\put(22,-12){$G_6$}
\end{picture}
\end{center}
\end{figure}

In the case that $G=G_5$, in Figure 7, in the first row, for each possibility for $T$ we draw $G\del(X-T)$. The bold edges are those of $T$. In each graph of the first row, the double edge $g$ has the property that the graph $si(G\del(X-T)/g)$, draw in the second row in the respective column, contradicts the minimality of $G$. The vertex obtained in the contraction is labelled by $z$. In the third and fourth rows of Figure 7, we have the same for the case in which $G=G_6$. This proves the theorem
\end{proofof}
 
%Let $w$ be the vertex of $T$ not incident to $e$. In the next table we list the possibilities for $w$ in $G_5$ and $G_6$ and a corresponding $G$-minor $H$, such that $si(H)$ contradicts the minimality of $G$. For better visualization, when contracting edges, always keep only the vertices labelled by the $u_i$'s in the left side, like in the figures, except for the first item on the list, for which is better to draw only $w_2,u_2$ and $u_3$ on the left side.
%\begin{center}
%\begin{tabular}{c|c|c|c}
%$G\del X$ & $w$  & $H$ & $si(H)\cong$ \\\hline
%$G_5$ & $v_1$ & $(G_5+w_1v_1+w_2v_1)/u_1v_1$ & $K_{3,3}^{2,2}$\\\hline
%$G_5$ & $u_1$ & $(G_5+w_1u_1)/w_2v_3$ & $K_{3,3}^{1,1}$\\\hline
%$G_5$ & $u_2$ & $(G_5+w_2u_2)/w_1u_3$ & $K_{3,3}^{1,1}$\\\hline
%$G_5$ & $u_3$ & $(G_5+w_2u_3)/u_2w_1$ & $K_{3,3}^{1,1}$\\\hline
%$G_5$ & $v_3$ & $(G_5+w_1v_3)/u_1w_2$ & $K_{3,3}^{1,1}$\\\hline
%$G_6$ & $u_1$ & $(G_6+w_2u_1)/w_1u_2$ & $K_{3,3}^{2,1}$\\\hline
%$G_6$ & $u_2$ & $(G_6+w_2u_2)/w_1u_1$ & $K_{3,3}^{2,1}$\\\hline
%$G_6$ & $u_3$ & $(G_6+w_1u_3)/w_2v_3$ & $K_{3,3}^{1,1}$\\\hline
%$G_6$ & $v_1$ & $(G_6+w_1v1+w_2v_1)/w_2v_3$ & $K_{3,3}^{1,3}$\\\hline
%$G_6$ & $v_3$ & $(G_6+w_1v_3)/w_1u_2$ & $K_{3,3}^{2,1}$
%\end{tabular}
%\end{center}
\newcommand{\kttright}{
\put(15,0){\circle*{4}}
\put(15,20){\circle*{4}}
\put(15,40){\circle*{4}}
\put(40,0) {\circle*{4}}
\put(40,20){\circle*{4}}
\put(40,40){\circle*{4}}
\drawline(15,0)(40,0)
\drawline(15,0)(40,20)
\drawline(15,0)(40,40)
\drawline(15,20)(40,0)
\drawline(15,20)(40,20)
\drawline(15,20)(40,40)
\drawline(15,40)(40,0)
\drawline(15,40)(40,20)
\drawline(15,40)(40,40)
}
\newcommand{\gfivebase}{\put(15,10){\circle*{4}}\put(0,8){$u_3$}
\put(15,30){\circle*{4}}\put(0,28){$u_2$}
\put(15,50){\circle*{4}}\put(0,48){$u_1$}
\put(45,0){\circle*{4}}\put(46,-2){$v_3$}
\put(45,20){\circle*{4}}\put(46,18){$w_2$}
\put(45,40){\circle*{4}}\put(46,38){$w_1$}
\put(45,60){\circle*{4}}\put(46,58){$v_1$} }
\newcommand{\boldtriangle}[3]{{\linethickness{1.2pt}
\qbezier#1#2#2
\qbezier#2#3#3
\qbezier#3#1#1}}
\begin{figure}\begin{center}\caption{}\medskip
\begin{picture}(50,60)%-------------------------------------------------1-------------------
{\linethickness{1.2pt}
\qbezier(45,20)(45,30)(45,40)
\qbezier(45,40)(45,50)(45,60)
\qbezier(45,20)(77,43)(45,60)}
\gfivebase
\thinlines
%\dottedline(15,50)(45,60)
\drawline(15,51)(45,61)(45,59)(15,49)
\drawline(15,50)(45,20)\drawline(15,50)(45,0)\drawline(15,30)(45,60)
\drawline(15,30)(45,40)\drawline(15,30)(45,0)\drawline(15,10)(45,60)
\drawline(15,10)(45,40)\drawline(15,10)(45,0)\drawline(45,40)(45,20)
\drawline(45,20)(45,0)
\drawline(15,10)(15,30)
\end{picture}\qquad
\begin{picture}(50,60)\gfivebase%-------------------------------------------------2-------------------
{\linethickness{1.2pt}
\qbezier(15,50)(45,20)(45,20)\qbezier(45,20)(45,40)(45,40)\qbezier(45,40)(15,50)(15,50)
}
\thinlines
\drawline(15,50)(45,60)\drawline(15,50)(45,20)\drawline(15,50)(45,0)
\drawline(15,30)(45,60)\drawline(15,30)(45,40)\drawline(15,30)(45,0)
\drawline(15,10)(45,60)\drawline(15,10)(45,40)\drawline(15,10)(45,0)
\drawline(45,40)(45,20)
\drawline(46,20)(46,0)(44,0)(44,20)%\dottedline(45,20)(45,0)
\drawline(15,10)(15,30)\end{picture}\qquad
\begin{picture}(50,60)\gfivebase%------------------------------------------------------3
%{\thicklines\drawline(45,20)(45,40)(15,30)(45,20)}
\boldtriangle{(45,20)}{(45,40)}{(15,30)}
\thinlines
\drawline(15,50)(45,60)\drawline(15,50)(45,20)\drawline(15,50)(45,0)
\drawline(15,30)(45,60)\drawline(15,30)(45,40)\drawline(15,30)(45,0)
\drawline(15,10)(45,60)\drawline(15,10)(45,0)
\drawline(45,40)(45,20)\drawline(45,20)(45,0)\drawline(15,10)(15,30)
\drawline(16,10)(46,40)(44,40)(14,10)%\dottedline(15,10)(45,40)
\end{picture}\qquad
\begin{picture}(50,60)\gfivebase%----------------------------------------------4
\boldtriangle{(45,20)}{(45,40)}{(15,10)}
\thinlines
\drawline(15,50)(45,60)\drawline(15,50)(45,20)\drawline(15,50)(45,0)
\drawline(15,30)(45,60)\drawline(15,30)(45,0)
\drawline(15,10)(45,60)\drawline(15,10)(45,40)\drawline(15,10)(45,0)
\drawline(45,40)(45,20)\drawline(45,20)(45,0)\drawline(15,10)(15,30)
\drawline(15,29)(45,39)(45,41)(15,31)%\dottedline(15,30)(45,40)
\end{picture}\qquad
\begin{picture}(50,60)%----------------------------------------------------------5
{\linethickness{1.2pt}
\qbezier(45,40)(45,20)(45,20)\qbezier(45,20)(45,0)(45,0)\qbezier(45,40)(77,20)(45,0)}\thinlines\gfivebase
\drawline(15,50)(45,60)
\drawline(15,51)(45,21)(45,19)(15,49)%\dottedline(15,50)(45,20)
\drawline(15,50)(45,0)\drawline(15,30)(45,60)\drawline(15,30)(45,40)
\drawline(15,30)(45,0)\drawline(15,10)(45,60)\drawline(15,10)(45,40)
\drawline(15,10)(45,0)\drawline(45,40)(45,20)\drawline(45,20)(45,0)
\drawline(15,10)(15,30)\end{picture}

\begin{picture}(50,60)%--------------------------------1
\boldtriangle{(15,40)}{(40,40)}{(40,20)}\thinlines\drawline(15,20)(15,0)\kttright
\put(-1,38){$w_2$}\put(0,18){$u_2$}\put(0,-2){$u_3$}\put(42,38){$z$}\put(42,18){$w_1$}\put(42,-2) {$v_3$}
\end{picture}\qquad
\begin{picture}(50,60)%--------------------------------------2
\boldtriangle{(15,40)}{(40,20)}{(40,0)}\thinlines\kttright\drawline(15,20)(15,0)
\put(0,38){$u_1$}\put(0,18){$u_2$}\put(0,-2){$u_3$}\put(41,38){$v_1$}\put(41,18){$w_1$}\put(42,-2) {$z$}
\end{picture}\qquad
\begin{picture}(50,60)%--------------------3
\boldtriangle{(15,0)}{(15,20)}{(40,20)}\thinlines\kttright\drawline(40,0)(40,20)
\put(0,38){$u_1$}\put(0,18){$u_2$}\put(0,-2){$\,\,\,z$}\put(41,38){$v_1$}\put(41,18){$w_2$}\put(41,-2) {$v_3$}
\end{picture}\qquad
\begin{picture}(50,60)%--------------------4
\boldtriangle{(15,0)}{(15,20)}{(40,20)}\kttright\drawline(40,0)(40,20)
\put(0,38){$u_1$}\put(0,18){$z$}\put(0,-2){$u_3$}\put(41,38){$v_1$}\put(41,18){$w_2$}\put(41,-2) {$v_3$}
\end{picture}\qquad
\begin{picture}(50,60)%------------------------------------------5
\boldtriangle{(15,40)}{(40,0)}{(40,20)}
\thinlines\kttright\drawline(15,0)(15,20)
\put(0,38){$w_2$}\put(0,18){$u_2$}\put(0,-2){$u_3$}\put(41,38){$z$}\put(41,18){$w_1$}\put(41,-2) {$v_3$}
\end{picture}
\end{center}

\begin{center}
\newcommand{\twostars}{
\drawline(15,50)(45,60)(15,30)
\drawline(45,60)(15,10)(45,0)
\drawline(15,50)(45,0)(15,30)}
\begin{picture}(50,60)%-------------------------------------------------6-------------------
\gfivebase
{\linethickness{1.2pt}\qbezier(45,20)(45,40)(45,60) \qbezier(45,20)(78,40)(45,60)}
\drawline(15,9)(45,19)(45,21)(15,11)%\dottedline(15,10)(45,20)
\thinlines\twostars\drawline(15,50)(45,40)(15,30)
\drawline(45,0)(45,20)\drawline(15,10)(15,30)
\end{picture}\qquad
\begin{picture}(50,60)%-------------------------------------------------7-------------------
\gfivebase\boldtriangle{(15,50)}{(45,40)}{(45,20)}\thinlines
\twostars
\drawline(15,29)(45,39)(45,41)(15,31)%\dottedline(15,30)(45,40)
\drawline(45,0)(45,20)(15,10) \drawline(15,10)(15,30)
\end{picture}\qquad
\begin{picture}(50,60)%------------------------------------------------------------------8
\gfivebase\boldtriangle{(15,30)}{(45,40)}{(45,20)}
\drawline(15,49)(45,39)(45,41)(15,51)%\dottedline(15,50)(45,40)
\thinlines\twostars\drawline(45,0)(45,20)(15,10)\drawline(15,10)(15,30)
\end{picture}\qquad
\begin{picture}(50,60)%------------------------------------------------------------------9
\gfivebase\boldtriangle{(45,20)}{(45,40)}{(15,10)}
\drawline(44,20)(44,0)(46,0)(46,20)%\dottedline(45,20)(45,0)
\thinlines\twostars\drawline(15,50)(45,40)(15,30)\drawline(15,10)(15,30)
\end{picture}\qquad
\begin{picture}(50,60)%----------------------------------------------------------------10
\gfivebase\boldtriangle{(45,0)}{(45,20)}{(45,40)}
{\linethickness{1.2pt} \qbezier(45,0)(75,20)(45,40)}
\drawline(15,29)(45,39)(45,41)(15,31)%\dottedline(15,30)(45,40)
\thinlines\twostars\drawline(15,50)(45,40)\drawline(15,10)(45,20)\drawline(15,10)(15,30)
\end{picture}

\begin{picture}(50,60)%------------------------------------------6
\boldtriangle{(40,40)}{(40,20)}{(15,0)}
\thinlines\kttright\drawline(15,0)(15,20)
\put(0,38){$u_1$}
\put(0,18){$u_2$}
\put(0,-2){$z$}
\put(41,38){$v_1$}
\put(41,18){$w_1$}
\put(41,-2) {$v_3$}
\end{picture}\qquad
\begin{picture}(50,60)%------------------------------------------7
\boldtriangle{(15,40)}{(40,20)}{(15,20)}
\thinlines\kttright\drawline(15,0)(15,20)\drawline(40,0)(40,20)
\put(0,38){$u_1$}
\put(0,18){$z$}
\put(0,-2){$u_3$}
\put(41,38){$v_1$}
\put(41,18){$w_2$}
\put(41,-2) {$v_3$}
\end{picture}\qquad
\begin{picture}(50,60)%------------------------------------------8
\boldtriangle{(15,40)}{(40,20)}{(15,20)}
\thinlines\kttright\drawline(15,0)(15,20)\drawline(40,0)(40,20)
\put(0,38){$z$}
\put(0,18){$u_2$}
\put(0,-2){$u_3$}
\put(41,38){$v_1$}
\put(41,18){$w_2$}
\put(41,-2) {$v_3$}
\end{picture}\qquad
\begin{picture}(50,60)%------------------------------------------9
\boldtriangle{(15,0)}{(40,20)}{(40,0)}
\thinlines\kttright\drawline(15,0)(15,20)\drawline(40,0)(40,20)
\put(0,38){$u_1$}
\put(0,18){$u_2$}
\put(0,-2){$u_3$}
\put(41,38){$v_1$}
\put(41,18){$w_2$}
\put(41,-2) {$z$}
\end{picture}\qquad
\begin{picture}(50,60)%------------------------------------------10
\boldtriangle{(15,20)}{(40,20)}{(40,0)}
\thinlines\kttright\drawline(15,0)(15,20)\drawline(40,0)(40,20)
\put(0,38){$u_1$}
\put(0,18){$z$}
\put(0,-2){$u_3$}
\put(41,38){$v_1$}
\put(41,18){$w_2$}
\put(41,-2) {$v_3$}
\end{picture}
\end{center}
\end{figure}

\begin{proofof}\emph{ Proof of Theorem \ref{k5}: }
Suppose that $G$ is a $3$-connected simple graph with a $K_5$-minor and $T$ is a triangle of $G$. We may suppose that $G\ncong K_5$. By Lemma \ref{equivalent-minors}, $G$ has a $3$-connected simple minor $H\cong\kttuu$. By Theorem \ref{kttuu}, we choose $H$ having the edges of $T$ in a triangle. Let $e\in H$ be the edge such that $H/e\cong K_5$. Note that $e$ is in no triangle of $H$. So $H/e$ is the $K_5$-minor we are looking for.
\end{proofof}

\end{document}